\tikzset{my loop/.style =  {to path={
  \pgfextra{}
  [looseness=12,min distance=6mm]
  \tikz@to@curve@path},font=\sffamily\small
  }}  
\newtheorem{theorem}{Theorem}
\newtheorem{lemma}[theorem]{Lemma}
\newtheorem{cor}[theorem]{Corollary}
\theoremstyle{definition}
\theoremstyle{remark}
\newtheorem{rem}{Remark}
\newtheorem*{ack}{Acknowledgements}
\newcommand*{\R}{\mathbb{R}}
\newcommand*{\N}{\mathbb{N}}
\newcommand*{\C}{\mathbb{C}}
\newcommand*{\F}{\mathcal{F}}
\newcommand*{\G}{\mathcal{G}}
\newcommand*{\rk}{\mathrm{rk}}
\newcommand*{\tr}{\mathrm{tr}}
\newcommand*{\stab}{\mathrm{Stab}}
\newcommand*{\ev}{\text{ev}}
\newcommand{\la}{\langle}
\newcommand{\ra}{\rangle}
\newcommand{\field}{\mathbb{F}}
\newcommand{\pr}{\text{pr}}
\title{Edge-reflection positivity and weighted graph homomorphisms\footnote{An extended abstract of this work appeared in the Proceedings of the 7th European Conference on Combinatorics, Graph Theory and Applications (EuroComb `13) entitled: 'A characterization of edge-reflection positive partition functions of vertex-coloring models'. }}
\author{Guus Regts\footnote{University of Amsterdam. Email: \texttt{guusregts@gmail.com.} This work was started when the author was at CWI, Amsterdam; it is based on Chapter 7 of his PhD thesis \cite{R13}. } }
\begin{document}
\maketitle
\abstract{B. Szegedy [Edge coloring models and reflection positivity, {\sl Journal of
the American Mathematical Society} {\bf 20} (2007) 969--988] showed that the number of homomorphisms into a weighted graph is equal to the partition 
function of a complex edge-coloring model.
Using some results in geometric invariant theory, we characterize for which weighted graphs the edge-coloring 
model can be taken to be real valued that is, we characterize for which weighted graphs the number
of homomorphisms into them are edge-reflection positive. 
In particular, we determine explicitly for which simple graphs the number of homomorphisms into them is equal to the partition function of a real edge-coloring model.
This answers a question posed by Szegedy.

\section{Introduction}\label{sec:intro}
Partition functions of vertex and edge-coloring models are graph invariants 
introduced by de la Harpe and Jones \cite{HJ}. In fact, in \cite{HJ} they are called spin and vertex models respectively. 
(Partition functions of vertex-coloring models are exactly the number of homomorphisms into weighted graphs, as we will see in Section \ref{sec:prelim}.)
Both models give a rich class of graph invariants.
But they do not coincide. For example the number of matchings in a graph is the partition function of a real 
edge-coloring model but not the partition function of any real vertex-coloring model. 
This can be deduced from the characterization of partition functions of real 
vertex-coloring models by Freedman, Lov\'asz and Schrijver \cite{FLS}. (It is neither the partition 
function of any complex vertex-coloring model, but we will not prove this here.)
Conversely, the number of independent sets is not the partition function of any real edge-coloring model, as 
follows from Szegedy's characterization of partition functions of real edge-coloring models \cite{SZ7}, but it is equal to the number of homomorphisms into 
\tikzstyle{vertex}=[circle,draw=black!100,fill=black, scale=.4]
\begin{tikzpicture}[every loop/.style={}]
\node[vertex](a){};
\node[vertex, right of =a, node distance=1.2cm](b){};
\node[left of =a, node distance=.5cm](c){};
\path (a) edge[in=120, out=180,loop] (a);
\draw (a) to (b);
\end{tikzpicture}
\quad (i.e., it is the partition function of a (real) vertex-coloring model).

However, Szegedy \cite{SZ7} showed that the partition function of any vertex-coloring model can be 
obtained as the partition function of a complex edge-coloring model.
Moreover, he gave examples when the edge-coloring model can be taken to be real valued. 
This made him ask the question which partition functions of real vertex-coloring models are partition 
functions of real edge-coloring models (cf. \cite[Question 3.2]{SZ7}).
In fact, he phrased his question in terms of edge-reflection positivity.
We will get back to that in Section \ref{sec:edge-ref}.

In this paper we completely characterize for which vertex-coloring models there exists a real edge-coloring model 
such that their partition functions coincide, answering Szegedy's question.

The organization of this paper is as follows. 
In the next section we give definitions of partition functions of edge and vertex-coloring models and state our main 
result (cf. Theorem \ref{thm:main1}). 
In Section \ref{sec:edge-ref} we show, as an application of our main result, that the number of homomorphisms into a simple graph $G$, is not equal to the partition function of a real edge-coloring model unless $G$ is the disjoint union of isolated vertices and complete bipartite graphs with equal sides and we discuss edge-reflection positivity. 
Section \ref{sec:proof} is devoted to proving Theorem \ref{thm:main1}.

\section{Partition functions of edge and vertex-coloring models}\label{sec:prelim}
We give the definitions of edge and vertex-coloring models and their partition functions.
After that we describe Szegedy's result on how to obtain a complex edge-coloring model
from a vertex-coloring model such that their partition functions are the same.
(The existence also follows from the characterization of partition functions of complex edge-coloring
models given in \cite{DGLRS}, but Szegedy gives a direct way to construct the edge-coloring model from the vertex-coloring model.)
And finally we will state our main result saying which partition functions of vertex-coloring
models are partition functions of real edge-coloring models. 

Let $\G$ be the set of all graphs, allowing multiple edges and loops.
Let $\C$ denote the set of complex numbers and let $\R$ denote the set of real numbers.
If $V$ is a vector space we write $V^*$ for its dual space, but by $\C^*$ we mean $\C\setminus\{0\}$.
For a matrix $U$ we denote by $U^*$ its conjugate transpose and by $U^T$ its transpose.

Let $\field$ be a field.
An $\field$-valued \emph{graph invariant} or \emph{graph parameter} is a map $p:\G \to \field$ which takes the same values on isomorphic graphs.
A graph parameter $f:\G\to \C$ is called \emph{multiplicative} if $f(\emptyset)=1$ and if $f(G\cup H)=f(G)f(H)$ 
for all $G,H\in \G$, where $G\cup H$ denotes the disjoint union of $G$ and $H$.

Throughout this paper we set $\N=\{0,1,2\ldots\}$ and for $n\in \N$, $[n]$ denotes the set $\{1,\ldots,n\}$.
We will now introduce partition functions of vertex and edge-coloring models.

Let $a\in (\C^*)^n$ and let $B\in \C^{n\times n}$ be a symmetric matrix.  
We call the pair $(a,B)$ an \emph{$n$-color vertex-coloring model}. 
If moreover, $a$ is positive and $B$ is real, then we call $(a,B)$ a \emph{real $n$-color vertex-coloring model}.
When talking about a vertex-coloring model, we will sometimes omit the number of colors.
The \emph{partition function} of an  $n$-color vertex-coloring model $(a,B)$ is the graph invariant $p_{a,B}:\G\to \C$ defined by
\begin{equation}
p_{a,B}(H):=\sum_{\phi:V(H)\to [n]}\prod_{v\in V(H)}a_{\phi(v)}\cdot \prod_{uv\in E(H)} B_{\phi(u),\phi(v)},
\end{equation}
for $H\in \G$.
Clearly, $p_{a,B}$ is multiplicative.

We can view $p_{a,B}$ in terms of weighted homomorphisms.
Let $G(a,B)$ be the complete graph on $n$ vertices (including loops) with vertex weights given by $a$ 
and edge weights given by $B$. 
Then $p_{a,B}(H)$ can be viewed as counting the number of weighted homomorphisms of $H$ into $G(a,B)$.
In this context $p_{a,B}$ is often denoted by $\hom(\cdot,G(a,B))$. 
We will usually use $\hom(\cdot,G)$ if $G$ is an unweighted graph to emphasize that we count ordinary graph homomorphisms.
The vertex-coloring model can also be seen as a statistical mechanics model where vertices serve as particles, 
edges as interactions between particles, and colors as states or energy levels. 

Let for a field $\field$,
\begin{equation}
R(\field):=\field[x_1,\ldots,x_k] 
\end{equation}
denote the polynomial ring in $k$ variables. We will only consider $\field=\R$ and $\field=\C$.
Note that there is a one-to-one correspondence between linear functions $h:R(\field)\to \field$ and maps 
$h:\N^k\to \field$; $\alpha\in \N^k$ corresponds to the monomial 
$x^{\alpha}:=x_1^{\alpha_1}\cdots x_k^{\alpha_k}\in R(\field)$ and the monomials form a basis for $R(\field)$.
We call any $h\in R(\C)^*$ a \emph{$k$-color edge-coloring model}. 
Any $h\in R(\R)^*$ is called a \emph{real $k$-color edge-coloring model}.
When talking about an edge-coloring model, we will sometimes omit the number of colors.
The \emph{partition function} of a $k$-color edge-coloring model $h$ is the graph invariant $p_h:\G\to \C$ defined by
\begin{equation}
p_h(G)= \sum_{\phi: E(G)\to [k]} \prod_{v\in V(G)} h\Big(\prod_{e\in \delta(v)}x_{\phi(e)}\Big),	\label{eq:part edge}
\end{equation}
for $G\in \G$. Here $\delta(v)$ is the multiset of edges incident with $v$. 
Note that, by convention, a loop is counted twice.
Denote the isolated vertex by $K_1$. Then $p_h(K_1)=0$ according to \eqref{eq:part edge} (as $E(K_1)=\emptyset$).
It is however more natural to define $p_h(K_1)=h(1)$ and extend this multiplicatively for disjoint unions of $K_1$'s.
Then $p_h$ is multiplicative.

The edge-coloring model can be considered as a statistical mechanics model, 
where edges serve as particles, vertices as interactions between particles, and colors as states or energy levels. 
Moreover, partition functions of edge-coloring models generalize the number of proper line graph colorings.

We will now describe a result of Szegedy \cite{SZ7} (see also \cite{SZ10}) showing that partition functions of 
vertex-coloring models are partition functions of edge-coloring models. 

Let $(a,B)$ be an $n$-color vertex-coloring model.
As $B$ is symmetric we can write $B=U^TU$ for some $k\times n$ (complex) matrix $U$, where $k=\rk(B)$, the \emph{rank} of $B$ (cf. \cite[Lemma 5.2.4]{GW}), unless $B$ is equal to the all zero matrix.
Let $u_1,\ldots, u_n\in \C^k$ be the columns of $U$.
Define the edge-coloring model $h$ by $h:=\sum_{i=1}^na_i \ev_{u_i}$, where for $u\in \C^k$, $\ev_u\in R(\C)^*$ is 
the linear map defined by $p\mapsto p(u)$ for $p\in R(\C)$.

\begin{lemma}[Szegedy \cite{SZ7}]	\label{lem:spinvert}
Let $(a,B)$ and $h$ be as above. Then $p_{a,B}(G)=p_h(G)$ for every graph $G$. 
\end{lemma}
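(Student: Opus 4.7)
The plan is to expand the right-hand side $p_h(G)$ using the definition of $h$, swap two sums, and recognize the resulting inner products as the entries of $B$.

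First I would unfold $h$ at each vertex. By definition,
\[
h\Big(\prod_{e\in\delta(v)}x_{\phi(e)}\Big) \;=\; \sum_{i=1}^n a_i\prod_{e\in\delta(v)}(u_i)_{\phi(e)},
\]
so plugging this into the formula for $p_h(G)$ and distributing the product over $v\in V(G)$ converts the product of sums into a sum over colorings $\psi:V(G)\to[n]$ (one choice of $i$ per vertex). This gives
\[
p_h(G) \;=\; \sum_{\phi:E(G)\to[k]}\;\sum_{\psi:V(G)\to[n]}\;\prod_{v\in V(G)} a_{\psi(v)}\prod_{e\in\delta(v)}(u_{\psi(v)})_{\phi(e)}.
\]

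Next I would swap the two sums, pulling the $\psi$-sum outside and the $\phi$-sum inside, and then regroup the factors by edge rather than by vertex. Each edge $e=uv$ appears in exactly $\delta(u)$ and $\delta(v)$ (a loop at $v$ contributes twice to $\delta(v)$, by the stated convention, which handles this case uniformly), so
\[
\prod_{v\in V(G)}\prod_{e\in\delta(v)}(u_{\psi(v)})_{\phi(e)}
\;=\;\prod_{uv\in E(G)}(u_{\psi(u)})_{\phi(e)}(u_{\psi(v)})_{\phi(e)}.
\]
The inner $\phi$-sum then factorizes over edges, and summing over $\phi(e)\in[k]$ for each edge yields
\[
\sum_{c=1}^k (u_{\psi(u)})_c(u_{\psi(v)})_c \;=\; (U^TU)_{\psi(u),\psi(v)} \;=\; B_{\psi(u),\psi(v)}.
\]

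Combining these, the right-hand side becomes exactly the defining sum for $p_{a,B}(G)$. I do not expect any genuine obstacle here: the entire argument is bookkeeping, with the only subtle point being the loop convention (a loop must contribute $B_{\psi(v),\psi(v)}$, which is guaranteed precisely because $\delta(v)$ counts the loop twice, matching the two factors $(u_{\psi(v)})_{\phi(e)}^2$ that appear after regrouping). Once the sum-swap and edge-regrouping are carried out carefully, the identity $p_h=p_{a,B}$ drops out.
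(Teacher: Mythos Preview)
Your proposal is correct and follows essentially the same route as the paper's proof: expand $h$ at each vertex, distribute to obtain a sum over vertex colorings $\psi$, swap the $\phi$- and $\psi$-sums, regroup by edge, and recognize the resulting inner products as the entries of $B=U^TU$. Your explicit treatment of the loop convention is a nice touch, but otherwise there is no meaningful difference from the paper's argument.
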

Although the proof is not difficult we will give it for completeness.
\begin{proof}
Let $G=(V,E)\in \G$. We may assume that $E\neq \emptyset$.
Then $p_h(G)$ is equal to
\begin{align}
&\sum_{\phi:E\to [k]}\prod_{v\in V}h\big(\prod_{e\in \delta(v)}x_{\phi(e)}\big)
=\sum_{\phi:E\to [k]}\prod_{v\in V}\bigg(\sum_{i=1}^n a_i\prod_{e\in \delta(v)}u_i\big(\phi(e)\big)\bigg)	\label{eq:spinvert}
\\
=&\sum_{\phi:E\to [k]}\sum_{\psi:V\to [n]}\prod_{v\in V}  \Big(a_{\psi(v)}\prod_{e\in \delta(v)}u_{\psi(v)}\big(\phi(e)\big)\Big)
\nonumber 
\\
=&\sum_{\psi:V\to [n]}\prod_{v\in V}a_{\psi(v)}\cdot \sum_{\phi:E\to [k]}\prod_{v\in V} \prod_{e\in \delta(v)}u_{\psi(v)}\big(\phi(e)\big)
\nonumber 
\\
=&\sum_{\psi:V\to [n]}\prod_{v\in V}a_{\psi(v)}\cdot \sum_{\phi:E\to [k]}\prod_{vw\in E} u_{\psi(v)}\big(\phi(vw)\big)u_{\psi(w)}\big(\phi(vw)\big)
\nonumber\\
=&\sum_{\psi:V\to [n]}\prod_{v\in V}a_{\psi(v)}\cdot\prod_{vw\in E}\sum_{i=1}^k u_{\psi(v)}(i)u_{\psi(w)}(i)
=\sum_{\psi:V\to[n]}\prod_{v\in V}a_{\psi(v)}\cdot \prod_{vw\in E}B_{\psi(v),\psi(w)}.	
\nonumber	
\end{align}
By definition, the last line of \eqref{eq:spinvert} is equal to $p_{a,B}(G)$. This completes the proof.
\end{proof}

Note that the proof of Lemma \ref{lem:spinvert} also shows that if $h\in R(\C)^*$ is given by
$h=\sum_{i=1}^na_i \ev_{u_i}$ for certain $a\in (\C^*)^n$ and $u_1,\ldots, u_n\in \C^k$, then $p_h$ can be realized as the 
partition function of an $n$-color vertex-coloring model. Namely take $a=(a_1,\ldots, a_n)$ and $B=U^TU$ where $U$ 
is the matrix with columns the $u_i$. 
\\ \quad \\
\indent
Let $(a,B)$ be an $n$-color vertex-coloring model. We say that $i,j\in [n]$ are \emph{twins of $(a,B)$}
if $i\neq j$ and the $i$th row of $B$ is equal to the $j$th row of $B$. If $(a,B)$ 
has no twins we call the model \emph{twin free}.
Suppose now $i,j\in [n]$ are twins of $(a,B)$. 
If $a_i+a_j\neq 0$, let $B'$ be the matrix obtained from $B$ by removing row and column $i$ and
let $a'$ be the vector obtained from $a$ by setting $a'_j:=a_i+a_j$ and then removing the $i$th entry from it.
In case $a_i+a_j=0$, we remove the $i$th and the $j$th row and column from $B$ to obtain $B'$ and 
we remove the $i$th and the $j$th entry from $a$ to obtain $a'$.
Then $p_{a',B'}=p_{a,B}$. 
So for every vertex-coloring model with twins, we can construct a vertex-coloring model with fewer colors
which is twin free and which has the same partition function.

We need a few more definitions to state our main result.
Let $k\in \N$.
For a $k\times n$ matrix $U$ we denote its columns by $u_1,\ldots,u_n$.
Let, for any $k$, $(\cdot, \cdot)$ denote the standard bilinear form on $\C^k$; i.e, $(u,v)=u^Tv$.
We call the matrix $U$ \emph{nondegenerate} if the span of $u_1,\ldots,u_n$ is nondegenerate with respect
to $(\cdot,\cdot)$. In other words, if $\rk(U^TU)=\rk(U)$.
By $O_k(\C)$ we denote the complex orthogonal group, i.e. 
$O_k(\C):=\{g\in \C^{k\times k}\mid (gv,gv)=(v,v) \text{ for all } v\in \C^k\}$.

We can now state our main result.

\begin{theorem}		\label{thm:main1}
Let $(a,B)$ be a twin-free $n$-color vertex-coloring model.
Let $U$ be a $k\times n$ matrix such that $U^TU=B$, with $k=\rk(B)$. 
Then there exists $g\in O_k(\C)$ such that $gU(gU)^*\in \R^{k\times k}$.
For each such $g$ the following are equivalent:
\begin{enumerate}
 \item[  (i)] $p_{a,B}=p_h$ for some real edge-coloring model $h$,
 \item[ (ii)] the set $\big\{\binom{gu_i}{a_i}\mid i=1,\ldots, n\big \}$ 
 is closed under complex conjugation,
 \item[(iii)] $\sum_{i=1}^n a_i\ev_{gu_i}$ is real.
\end{enumerate}
\end{theorem}


In case $B$ is real, there is an easy way to obtain a $k\times n$ matrix $U$, where $k=\rk(B)$,
such that $UU^*\in \R^{k\times k}$ and $U^TU=B$, using the spectral decomposition of $B$.
So by Theorem \ref{thm:main1}, we get the following characterization
of partition functions of real vertex-coloring models that are partition functions of real edge-coloring models.
We will state it as a corollary.

\begin{cor}\label{cor:main}
Let $(a,B)$ be a twin-free real $n$-color vertex-coloring model. 
Then $p_{a,B}=p_h$ for some real edge-coloring model $h$ 
if and only if for each $i\in[n]$ there exists $j\in [n]$ such that
\begin{enumerate}
 \item[(i) ] $a_i=a_j$,
 \item[(ii)] for each eigenvector $v$ of $B$ with eigenvalue $\lambda:\begin{cases} 
                                                                          \lambda>0 & \Rightarrow v_i=v_j,
                                                                          \\
                                                                          \lambda<0  &\Rightarrow v_i=-v_j.
                                                                        \end{cases}$
 
\end{enumerate}
\end{cor}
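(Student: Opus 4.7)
The plan is to apply Theorem \ref{them:main1} with a specific nondegenerate matrix $U$ built from the spectral decomposition of $B$, chosen so that $UU^*\in\R^{k\times k}$, so that the ``moreover'' clause of the theorem lets us take $g$ equal to the identity.

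For the construction, I would write the real symmetric matrix $B$ as $QDQ^T$ with $Q$ real orthogonal and $D=\mathrm{diag}(\lambda_1,\ldots,\lambda_n)$, set $k:=\rk(B)$, and choose a square root $\sqrt{\lambda_r}$ for each nonzero eigenvalue, taken positive real when $\lambda_r>0$ and equal to $i\sqrt{|\lambda_r|}$ when $\lambda_r<0$. Defining $U$ as the $k\times n$ matrix whose row indexed by $r$ is $\sqrt{\lambda_r}\,q_r^T$ (where $q_r$ is the $r$-th column of $Q$, and only rows with $\lambda_r\neq 0$ are kept), one checks directly that $U^TU=B$, $\rk(U)=k$, and $UU^*=\mathrm{diag}(|\lambda_r|)\in\R^{k\times k}$. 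Hence $U$ is nondegenerate and the ``moreover'' clause of Theorem \ref{them:main1} applies: $p_{a,B}$ equals the partition function of a real edge coloring model iff the set $\{(u_i,a_i):i\in[n]\}$ is closed under complex conjugation.

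It then remains to translate this closure condition into clauses (i) and (ii) of the corollary. Since $a$ is real, closure under conjugation amounts to requiring, for each $i$, some $j$ with $a_i=a_j$ (clause (i)) and $\overline{u_i}=u_j$. The $r$-th entry of $u_i$ is $\sqrt{\lambda_r}(q_r)_i$ with $(q_r)_i\in\R$, so the second equation unpacks entry-wise to $(q_r)_i=(q_r)_j$ when $\lambda_r>0$ and $(q_r)_i=-(q_r)_j$ when $\lambda_r<0$, which is clause (ii) tested on the chosen orthonormal eigenbasis $\{q_r\}$. To upgrade from ``for each basis eigenvector'' to ``for every eigenvector $v$'' one uses that, within a fixed eigenspace, both conditions $v_i=v_j$ and $v_i=-v_j$ are linear in $v$ and so extend from a basis to the whole space; twin-freeness ensures the columns of $U$ are distinct, so the partner index $j$ attached to $i$ is unambiguous. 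The only point where some care is required, and essentially the only content of the argument beyond Theorem \ref{them:main1}, is the sign bookkeeping $\overline{\sqrt{\lambda_r}}=-\sqrt{\lambda_r}$ for $\lambda_r<0$, which is exactly what produces the positive-versus-negative eigenvalue dichotomy in clause (ii).
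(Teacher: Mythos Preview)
Your proposal is correct and follows exactly the route the paper indicates: the paper does not spell out a proof of the corollary but only remarks that, for real $B$, the spectral decomposition yields a nondegenerate $U$ with $U^TU=B$ and $UU^*\in\R^{k\times k}$, so that the ``moreover'' clause of Theorem~\ref{them:main1} applies with $g$ the identity. Your construction of $U$ via rows $\sqrt{\lambda_r}\,q_r^T$ is precisely this, and your translation of the conjugation-closure condition into the eigenvector clauses (i) and (ii)---including the linearity argument extending from an eigenbasis to the full eigenspace---is the natural way to complete the details the paper omits. One small remark: the distinctness of the $u_i$ (from twin-freeness) is used so that the set $\{(u_i,a_i)\}$ genuinely has $n$ elements and conjugation-closure is equivalent to the existence, for each $i$, of a partner $j$; uniqueness of $j$ is not actually needed for the argument.
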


\section{Homomorphisms into simple graphs and edge-reflection positivity}\label{sec:edge-ref}
We call a graph \emph{twin free} is its adjacency matrix does not contain two equal rows.
In this section we show, as an application of Theorem \ref{thm:main1}, that for each simple twin free graph $G$ that contains a vertex of degree at least $2$, the parameter $\hom(\cdot,G)$ is not the partition function of a real edge-coloring model. 
We moreover introduce the notion of edge-reflection positivity and Szegedy's characterization of
partition functions of real edge-coloring models in terms of multiplicativity and edge-reflection positivity.
After that we shall discuss some consequences of this characterization to the homomorphism numbers. 
\subsection{Homomorphisms into simple graphs}}
\begin{theorem}\label{thm:graphs not real}
Let $G$ be a simple twin free graph.
Then $\hom(\cdot,G)$ is the partition function of a real edge-coloring model if and only if $G$ is the disjoint union of edges and at most one isolated vertex.
\end{theorem}
\begin{proof}
Denote the edge by $K_2$.
It is easy to see by Corollary \ref{cor:main} that $\hom(\cdot,K_2)$ is the partition function of a real edge-coloring model. This was already shown by Szegedy \cite{SZ7}.
This easily extends to the disjoint union of edges and vertices.

To prove the opposite direction, let $A$ be the adjacency matrix of $G$. 
Just as in the derivation of Corollary \ref{cor:main}, using the spectral decomposition of $A$, we can write
$A=U^TU$, where $U$ has a special structure: 
\begin{equation}\label{eq:structure}
\text{each row of }U \text{ is either imaginary zero or real.}
\end{equation}
This follows from the fact that each row of $U$ is equal to the product of $v$ with a square root of $\lambda$, where $\lambda$ is an eigenvalue of $A$ and $v$ a real eigenvector corresponding to that eigenvalue. 

Let $u_1,\ldots,u_n$ be the columns of $U$ and let $h=\sum_{i=1}^n \ev_{u_i}$.
As $G$ is twin free, the $u_i$ are distinct, so Theorem \ref{thm:main1} implies that $\hom(\cdot,G)$ is the partition function of a real edge-coloring model if and only if $h$ is real valued.
Now suppose that $h$ is real-valued.
Let for $d\in \N$, $B_d$ be the multigraph consisting of two vertices connected by $d$ edges.
For convenience we introduce the following notation: for a map $\phi:[d]\to [n]$ let 
$x^{\phi}:=x_1^{|\phi^{-1}(1)|}\cdots x_n^{|\phi^{-1}(n)|}$.
Then, since $h$ is real valued,
\begin{equation} \label{eq:norm1}
p_h(B_d)=\sum_{\phi:[d]\to [n]} \Big(\sum_{i=1}^n x^{\phi}(u_i)\Big)\Big(\sum_{j=1}^n x^{\phi}(u_j)\Big)
=\sum_{\phi:[d]\to [n]} \Big(\sum_{i=1}^n x^{\phi}(u_i)\Big)\Big(\overline{\sum_{j=1}^n x^{\phi}(u_j)}\Big).
\end{equation}
For even $d$, the righthand-side of \eqref{eq:norm1} can be lower bounded as follows:
\begin{equation}\label{eq:norm2}
p_h(B_d)=\sum_{i,j=1}^n \sum_{\phi:[d]\to[n]} x^{\phi}(u_i)x^{\phi}(\overline{u_j})
=\sum_{i,j=1}^n \la u_i,u_j\ra^d 
\geq\sum_{i=1}^n \|u_i\|^{2d},
\end{equation}
since, by \eqref{eq:structure}, $\la u_i,u_j\ra$ is a real number for each $i,j\in [n]$.
(Here $\la\cdot,\cdot\ra$ denotes the standard Hermitian inner product on $\C^k$.)

Now assume that $G$ has a vertex of degree at least $2$. Then
\begin{equation} \label{eq:contradiction}
\text{there exists a vertex } i\in[n] \text{ such that }\|u_i\|>1.
\end{equation}
To see this, suppose to the contrary that all $\|u_i\|$ are at most $1$.
As $h$ is real valued, we know by Theorem \ref{thm:main1}, that $\{u_1,\ldots,u_n\}$ is closed under complex conjugation.
Fix an edge $ij$ of $G$ and choose $j^*$ such that $u_j=\overline{u_{j^*}}$.
By Cauchy-Schwarz, 
\begin{equation}\label{eq:CS}
1=A_{i,j}=(u_i,u_{j})=\la u_i,u_{j^*}\ra \leq \|u_i\|\|u_{j^*}\|,
\end{equation}
which implies that $\|u_i\|=\|u_{j^*}\|=1$. 
Hence $u_{j^*}=u_i$. 
So for each edge $ij$ of $G$ we have $u_i=\overline{u_j}$.
Let $i$ be a vertex of degree at least $2$, and let $j,k$ be distinct neighbours of $i$.
It follows that $u_j=\overline{u_i}=u_k$, but this contradicts the fact that the $u_i$ are 
distinct (as $G$ is twin free). This shows \eqref{eq:contradiction}.

Now \eqref{eq:norm2} implies that $p_h(B_{2l})$ tends to infinity as $l\to \infty$.
But this contradicts the fact that $p_h(B_{2l})=\hom(B_{2l},G)\leq n^2$ for all $l$. 
So we conclude that if $G$ has a vertex of degree at least $2$, then $\hom(\cdot,G)$ is not equal
to the partition function of any real edge-coloring model.
\end{proof}
In view of removing twins, as described just above Theorem \ref{thm:main1}, the following is a direct corollary to Theorem \ref{thm:graphs not real}.
\begin{cor}\label{cor:graphs not real}
Let $G$ be a simple graph.
Then $\hom(\cdot,G)$ is the partition function of a real edge-coloring model if and only if each connected component of $G$ is either a single vertex or a complete bipartite graph with equal sides.
\end{cor}

\begin{rem}\label{rem:vertex weights}
Let $G=([n],E)$ be a simple graph in which some component is not a single vertex or a complete bipartite graph with two equal sides, so that $\hom(\cdot,G)$ is not the partition function of a real edge-coloring model. Then the proof of Theorem \ref{thm:graphs not real} shows that adding vertex weights $a_i>0$ $(i=1,\ldots,n)$ to $G$ this does not change, as $\hom(B_{2l},G)$ remains bounded.
\end{rem}
\subsection{Edge-reflection positivity}
To describe the concept of edge-reflection positivity we need some definitions.
Let $\bigcirc$ denote the pair $(\emptyset, \{1\})$, which we will call the \emph{circle}. 
Although technically it is not a graph, the circle can be thought of as the graph with one edge and no vertices. 
Let $\G'$ be the set consisting of disjoint unions of elements of $\G$ and finitely many circles.
Note that if $h$ is a $k$-color edge-coloring model, then $p_h(\bigcirc)=k$.
For any $l\in \N$, an \emph{$l$-fragment} is an element of $\G'$, which has $l$ of its vertices labeled $1$ up to $l$, 
each having degree one. 
These labeled vertices are called the \emph{open ends} of the fragment. 
An edge connected to an open end is called a \emph{half edge}. 
Let $\F_l$ be the collection of all $l$-fragments.
We can identify $\F_0$ with $\G'$.
Define a gluing operation $*:\F_l\times \F_{l}\to \G'$ as follows: for $F,H\in \F_l$ connect the neighbors of 
identically labeled open ends with an edge and then delete the open ends; the resulting graph is 
denoted by $F*H$.
Note that by gluing two half edges, of which both their endpoints are open ends, one creates a circle.

For any graph invariant $p,$ let $M_{p,l}$ be the $\F_l\times \F_l$ matrix\footnote{This is an infinite matrix whose rows and columns are indexed by $\F_k$.} defined by 
\begin{equation}
M_{p,l}(F,H)=p(F*H),
\end{equation}
for $F,H\in \F_l$.
This matrix is called the \emph{$l$-th edge connection matrix} of $p$.
A graph invariant $p$ for which $M_{p,l}$ is positive semidefinite for each $l\in \N$ is called \emph{edge-reflection positive}. 
We can now state Szegedy's characterization of partition functions of real edge-coloring models.

\begin{theorem}[Szegedy \cite{SZ7}]	\label{thm:SZ}
Let $p:\G'\to \R$ be a graph invariant. 
Then there exists a real edge-coloring model $h$ such that $p_h=p$ if and only if $p$ is multiplicative and edge-reflection positive.
\end{theorem}

In view of Theorem \ref{thm:SZ},
one could consider Theorem \ref{thm:main1} as a characterization
of those partition functions of vertex-coloring models that are edge-reflection positive.
In particular, by Theorem \ref{thm:graphs not real}, Theorem \ref{thm:SZ} implies that for each 
simple twin free graph $G$ which has a vertex of degree at least $2$, there exists $k,t\in \N$, $k$-fragments $F_1,\ldots, F_t$ and $\lambda\in \R^t$ such that
$\sum_{i,j=1}^t \lambda_i\lambda_j \hom(F_i*F_j,G) <0$.
It would be interesting to explicitly find such inequalities.

It is interesting to relate the consequence of this to homomorphism densities.
The \emph{homomorphism density} $t(H,G)$of a graph $H$ in a graph $G$ is equal to $\frac{1}{|V(G)|^{|V(H)|}}\hom(H,G)$. (This is the probability that a random map from $V(H)$ to $V(G)$ is a homomorphism. Equivalently, giving each vertex of $G$ weight $1/|V(G)|$, $t(H,G)$ is equal to the number of homomorphisms into the weighted graph $G$.)
Let for $p\in [0,1]$, ${\bf G}(n,p)$ be the Erd\H{o}s-R\'enyi random graph (i.e. each edge $ij$, $i,j\in [n]$ is selected independently with probability $p$).
By Remark \ref{rem:vertex weights}, with probability tending to one (as $n$ goes to infinity), $t(\cdot,{\bf G}(n,p))$ is not edge-reflection positive. 
But if we let $n$ go to infinity, $t(H,{\bf G}(n,p))\to p^{|E(H)|}$ (for all simple graphs $H$ with probability one).
This limiting parameter is however edge-reflection positive, as it can be represented by the partition function of the $1$-color edge-coloring model $h_p$ defined by $h_p(x^n):=\sqrt{p}^n$ for $n\in \N$.

We note that this example can be generalised quite a bit with the use of $W$-random graphs introduced by Lov\'asz and Szegedy \cite{LS6} (see also \cite{L12}). 

\section{Proof of Theorem \ref{thm:main1}}\label{sec:proof}
In this section we give a proof of Theorem \ref{thm:main1}; it is based on some fundamental results in geometric invariant theory. We first give a few lemma's after which we can give our proof of the main theorem.

We need some definitions and conventions.
For a square matrix $X$, $\tr(X)$ denotes the \emph{trace} of $X$, the sum of the diagonal elements of $X$.
Recall that $O_l(\C)$ denotes the complex orthogonal group.
The real orthogonal group is the subgroup of $O_l(\C)$ given by all real matrices and is denoted by $O_l(\R)$.

Let $W\in \C^{l\times n}$ be any matrix and
consider the function $f_W:O_l(\C)\to \R$ defined by 
\begin{equation}
g\mapsto \tr(W^*g^*gW)=\tr\big((gW)^*gW\big). 
\end{equation}
This function was introduced by Kempf and Ness \cite{KN} in the context of connected reductive linear algebraic
groups acting on finite dimensional vector spaces.
Note that $f_W$ is left-invariant under $O_l(\R)$ and right-invariant under 
$\stab(W):=\{g\in O_l(\C)\mid gW=W\}$.
Let $e\in O_l(\C)$ denote the identity.
We are interested in the situation that the infimum of $f_W$ over $O_l(\C)$ is equal to $f_W(e)$.

\begin{lemma}	\label{lem:function}
The function $f_W$ has the following properties:
\begin{description}
\item[(i) ]$\inf_{g\in O_l(\C)}f_W(g)=f_W(e)$ if and only if $WW^*\in \R^{l \times l}$,
 \item[(ii)] If $WW^*\in \R^{l\times l}$, then $f_W(e)=f_W(g)$ if and only if $g\in O_l(\R)\cdot \stab(W)$.
\end{description}
 \end{lemma}
\begin{proof}
We start by showing that 
\begin{equation}
f_W  \text{ has a critical point at }e\text{ if and only if }WW^*\in \R^{l\times l}.	\label{eq:critical}
\end{equation}
By definition, a critical point of $f_W$ is a point $g$ such that $(Df_W)_g(X)=0$ for all $X\in T_g(O_l(\C))$, 
where $T_g(O_l(\C))$ is the tangent space of $O_l(\C)$ at $g$ and where $(Df_W)_g$ is the derivative
of $f_W$ at $g$.
It is well known that the tangent space of $O_l(\C)$ at $e$ is the space of skew-symmetric matrices, i.e. 
$T_e(O_l(\C))=\{X\in \C^{l\times l}\mid X^T+X=0\}$.
It is easy to see that the derivative of $f_W$ at $e$ is the $\R$-linear map 
$(Df_W)_e\in \text{Hom}_{\R}(\C^{l\times l},\R)$ defined by
$Z\mapsto \tr(W^*(Z+Z^*)W)$.
Now let $Z$ be skew-symmetric and write $Z=X+\mathfrak{i}Y$, with $X,Y\in \R^{l\times l}$. 
Note that $Z$ is skew-symmetric if and only if both $X$ and $Y$ are skew-symmetric.
Write $W=V+\mathfrak{i}T$ with $V,T\in \R^{l\times l}$. 
Then $(Df_W)_e(Z)$ is equal to 
\begin{align}
&\tr\Big((V^T-\mathfrak{i}T^T)(X+\mathfrak{i}Y+X^T-\mathfrak{i}Y^T)(V+\mathfrak{i}T)\Big)=2\tr\Big((V^T-\mathfrak{i}T^T)\mathfrak{i}Y(V+\mathfrak{i}T)\Big)=		\notag
\\
&2\tr(T^TYV)-2\tr(V^TYT)=4\tr(T^TYV)=4\tr(YVT^T),
\end{align}
where we use that $X$ and $Y$ are skew symmetric, and standard properties of the trace.
So $Df_e(Z)=0$ for all skew symmetric $Y$ if and only if $TV^T=VT^T$.
That is, if and only if $WW^*\in \R^{l\times l}$. 
This shows \eqref{eq:critical}.

By a result of Kempf and Ness (cf. \cite[Theorem 0.1]{KN}) we can now conclude that (i) and (ii) hold.
However, we will give an independent and elementary proof. 

First the proof of $(i)$.
Note that \eqref{eq:critical} immediately implies that $f_W$ does not attain a minimum at $e$ 
if $WW^*\notin \R^{l\times l}$. 
Conversely, suppose $WW^*\in \R^{l \times l}$.
Since $WW^*$ is real and positive semidefinite there exists a real matrix $V$ such that
$WW^*=VV^T$. 
Now note that, by the cyclic property of the trace, $f_W(g)=\tr(g^*gWW^*)$. So we have $f_W=f_V$.
Let $I$ denote the identity matrix.
Take any $g=X+\mathfrak{i}Y\in O_l(\C)$, where $X,Y\in \R^{l\times l}$.
Using that $X^TX-Y^TY=I$, and the fact that $f_W$ is real valued, we find that
\begin{equation}
f_W(g)=\tr\Big((X^TX+Y^TY)VV^T\Big)=\tr(VV^T)+2\tr(Y^TYVV^T)\geq \tr(VV^T)=f_W(e). \label{eq:inner product}
\end{equation}
This proves (i).

Next, suppose that $f_W(g)=f_W(e)$ for some $g\in O_l(\C)$. 
Again, since $WW^*$ is real and positive semidefinite there exists a real matrix $V$ such that
$WW^*=VV^T$. 
Moreover, the span of the columns of $V$ is equal to the span of the columns of $W$.
This implies that $\stab(V)=\stab(W)$.
Now write $g=X+\mathfrak{i}Y$, with $X,Y\in \R^{l\times l}$.
As, by \eqref{eq:inner product}, $f_W(g)=f_W(e)$ if and only if $YV=0$,
it follows that $gV=XV+\mathfrak{i}YV=XV$ is a real matrix.
Let $v_1,\ldots, v_n$ be the columns of $V$.
Then, since by definition of the orhogonal group, $(gv_i,gv_j)=(v_i,v_j)$ for all $i,j$, and since the $gv_i$ are real,
there exists $g_1\in O_l(\R)$ such that $g_1gV=V$.
This implies that $g\in O_l(\R)\cdot \stab(V)$. 
This finishes the proof of (ii).
\end{proof}

For any $l$ and $a\in \C^l$ we denote by $\overline{a}$ the complex conjugate of $a$.

\begin{lemma} 	\label{lem:complex conj}
Let $u_1,\ldots,u_n \in \C^k$ be distinct vectors, let $a\in (\C^*)^n$ and let $h:=\sum_{i=1}^n  a_i \ev_{u_i}$. 
Then $h$ is a real edge-coloring model if and only if the set 
$\big \{\binom{u_i}{a_i}\mid i=1,\ldots, n\big \}$ is closed under complex conjugation.
\end{lemma}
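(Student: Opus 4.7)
The plan is to unpack what it means for $h$ to be real and to match the two resulting representations of $h$ as a linear functional on $R(\C)$, using linear independence of evaluation functionals at distinct points.

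For the ``if'' direction I would assume the set $\{(u_i,a_i)^T\mid i\in [n]\}$ is closed under complex conjugation. Since the $u_i$ are distinct, this yields a well-defined involution $\sigma\colon [n]\to [n]$ with $u_{\sigma(i)}=\overline{u_i}$ and $a_{\sigma(i)}=\overline{a_i}$. I would then evaluate on an arbitrary monomial $x^\alpha$ and take complex conjugates:
$$\overline{h(x^\alpha)}=\sum_{i=1}^n \overline{a_i}\,(\overline{u_i})^\alpha=\sum_{i=1}^n a_{\sigma(i)}\, u_{\sigma(i)}^\alpha=h(x^\alpha),$$
so $h(x^\alpha)\in \R$ for every $\alpha$, meaning $h\in R(\R)^*$.

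For the ``only if'' direction I would assume $h$ is real, so that $h=\overline{h}$ as linear functionals on $R(\C)$, i.e.\
$$\sum_{i=1}^n a_i\, \ev_{u_i}=\sum_{i=1}^n \overline{a_i}\,\ev_{\overline{u_i}}.$$
Writing $T:=\{u_1,\ldots,u_n\}\cup\{\overline{u_1},\ldots,\overline{u_n}\}$ and grouping coefficients at each $v\in T$, I would invoke the classical fact that the evaluation functionals $\{\ev_v\mid v\in T\}$ at distinct points of $\C^k$ are linearly independent (via multivariate Lagrange interpolation) to conclude that for every $v\in T$,
$$\sum_{i:\,u_i=v} a_i \;=\; \sum_{j:\,\overline{u_j}=v}\overline{a_j}.$$
Setting $v=u_i$ and using that the $u_\ell$ are distinct, the left-hand side equals $a_i\neq 0$, so there must exist (a unique) $j$ with $\overline{u_j}=u_i$ and $\overline{a_j}=a_i$; equivalently $(\overline{u_i},\overline{a_i})=(u_j,a_j)$ lies in the set, as required.

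The only nontrivial ingredient is the linear independence of the $\ev_v$ at distinct points, which is standard; everything else is bookkeeping. I therefore do not expect any real obstacle in writing this out formally.
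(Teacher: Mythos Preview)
Your proof is correct and follows essentially the same approach as the paper: both directions hinge on the linear independence of evaluation functionals at distinct points, which the paper makes explicit via Lagrange interpolating polynomials while you invoke it abstractly. The only cosmetic difference is that the paper first enlarges $\{u_i\}$ to be conjugation-closed (with zero weights) and then matches coefficients pairwise, whereas you compare the two sums over the union $T$ directly; the underlying idea is identical.
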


\begin{proof}
Suppose first that the set $\big\{ \binom{u_i}{a_i}\mid i=1,\ldots, n\big \}$ is closed under complex conjugation.
Then for $p\in R(\R)$, $h(p)=\sum_{i=1}^n a_ip(u_i)=\sum_{i=1}^n\overline{a_ip(u_i)}=\overline{h(p)}$. 
Hence, $h(p)\in \R$. So $h$ is real valued.

Now the 'only if' part.
By possibly adding some vectors to $\{u_1,\ldots,u_n\}$ and extending the vector $a$ with zero's,
we may assume that $\{u_1,\ldots,u_n\}$ is closed under complex conjugation.
We must show that $u_i=\overline{u_j}$ implies $a_i=\overline{a_j}$.
We may assume that $u_1=\overline{u_2}$.
Using Lagrange interpolating polynomials we find $p\in R(\C)$ such that $p(u_j)=1$ if $j=1,2$ and $0$ else.
Let $p':=1/2(p+\overline{p})$. Then $p'\in R(\R)$ and consequently, $h(p')=\sum_{i=1}^n a_i p(u_i)=a_1+a_2\in \R$.
Similarly, there exists $q\in R(\C)$ such that $q(u_1)=\mathfrak{i}$, $q(u_2)=-\mathfrak{i}$ and $q(u_j)=0$ if $j>2$.
Setting $q':=1/2(q+\overline{q})$ and applying $h$ to it, we find that $\mathfrak{i}(a_1-a_2)\in \R$.
So we conclude that $a_1=\overline{a_2}$.
Continuing this way proves the lemma.
\end{proof}

We next develop some framework and ideas from \cite{DGLRS} (see also \cite{DG}).
For any $l\in \N$, define
\begin{equation}
S:=\C[y_{\alpha}\mid \alpha\in \N^l],
\end{equation}
the polynomial ring in the infinitely many variables $y_{\alpha}$. These
variables are in bijective correspondence with the monomials of $R(\C)$
via $y_\alpha \leftrightarrow x_1^{\alpha_1} \cdots x_l^{\alpha_l}$.
Let $\N^l_d=\{\alpha\in \N^l\mid |\alpha|\leq d\}$ and let $S_d\subset S$ be the ring of 
polynomials in the (finitely many) variables $y_{\alpha}$ with 
$\alpha\in \N^l_d$. 
Furthermore, let $\G_d$ be the set of all graphs of maximum degree at most $d$.
Let $\C\G$ be the vector space consisting of (finite) formal $\C$-linear combinations of graphs and let
$\pi:\C \G\to S$ be the linear map defined by 
\begin{equation}
G \mapsto \sum_{\phi:EG\to [l]} \prod_{v\in VG} y_{\phi(\delta(v))},
\end{equation}
for any $G\in \G$, where we consider the multiset $\phi(\delta(v))$ as an element of $\N^l$.
Note that $\pi(G)(y)=p_y(G)$ for all $G\in \G$ and $y\in R(\C)^*$.

The orthogonal group acts on $S$ via the bijection between the variables of $S$ and the monomials of $R(\C)$.
Then, as was shown by Szegedy \cite{SZ7} (see also \cite{DGLRS}), for any $d$, 
\begin{equation} 	\label{eq:graphsgeninvs}
\pi(\C \G_d)=S_d^{O_l(\C)},
\end{equation}
where $S_d^{O_l(\C)}$ denotes the subspace of $S^d$ of polynomials that are $O_l(\C)$-invariant.
Note that the action of $O_l(\C)$ on $R(\C)$ induces an action on $R(\C)^*$, i.e. $O_l(\C)$ acts on edge-coloring models.
Then \eqref{eq:graphsgeninvs} in particular implies that $p_{gy}=p_y$ for all $g\in O_l(\C)$ and all $y\in R(\C)^*$.

Let, for any $d$,
\begin{equation}
Y_d:=\{y\in \C^{\N^l_d}\mid \pi(G)(y)=p_{h}(G) \text{ for all } G\in \G_d\}.
\end{equation}
Then $Y_d$ is a fiber of
the quotient map $\C^{\N^l_d} \to \C^{\N^l_d}//O_l(\C)$. In particular, $Y_d$ 
contains a unique closed orbit $C_d$ (cf. \cite[Section 8.3]{Hum} or \cite[Satz 3, page 101]{kraft}).

Let $\pr_d:\C^{\N^l}\to \C^{\N^l_d}$ be the
projection sending $y$ to $y_d:=y|_{\C^{\N^l_d}}$. 
We also write $\pr_d$ for the restriction of $\pr_d$ to $\C^{\N_{d'}}$, for any $d'\geq d$. 
Note that $\pr_d(Y_{d'})\subseteq Y_d$ for $d'\geq d$, as $\G_d\subseteq \G_{d'}$.  

We can consider any $k$-color edge-coloring model $y$ as an $l$-color edge-coloring model without 
changing its partition function on $\G$, by setting $y(\alpha)=0$ if $\alpha_i>0$ for some $i>k$.
The following lemma is based on results from \cite{DG}.

\begin{lemma}  \label{lem:orbit}
Let $h:=\sum_{i=1}^n a_i\ev_{u_i}\in R(\C)^*$, with $a\in (\C^*)^n$ and distinct $u_1,\ldots,u_n\in \C^k$.
Suppose the bilinear form restricted to the span of the $u_i$ is nondegenerate.
If $y$ is a real $l$-color edge-coloring model such that $p_{h}(G)=p_y(G)$ for all $G\in \G$,
then there exists $g\in O_{l}(\C)$ such that $gh=y$.
\end{lemma}

\begin{proof}
We may assume that $l\geq k$.
In case $l>k$, we need to append the $gu_i$'s with $l-k$ zero's.
Note that the bilinear form restricted to the span of the $u_i$ remains nondegenerate.

Then, by \cite[Theorem 5]{DG}, for each $d\geq 3n$, $h_d\in C_d$.
Now since $y$ is real valued, a result of Kempf and Ness \cite[Theorem 0.2]{KN} 
(see also \cite[Proposition 7.9]{R13}) implies that $y_d\in C_d$, for every $d$.
We now claim that this implies that there exists $g\in O_l(\C)$ such that $gh=y$.
Indeed, define, for any $d$, the stabilizer of $y_d$ by 
\begin{equation}
\stab(y_d):=\{g\in O_l(\C)\mid gy_d=y_d\}.
\end{equation}
Then $\stab(y_d)=\cap_{d'\leq d}\stab(y_{d'})$.
Since $O_l(\C)$ is Noetherian there exists $d_1\geq 3n$ such that 
$\stab(y_{d_1})=\cap_{d\in \N}\stab(y_d)$.
Now since we have a canonical bijection from $O_l(\C)/\stab(y_d)$ to $C_d$, 
this implies that for any $d\geq d_1$, if $g\in O_l(\C)$ is such that $gy_d=h_{d}$, then also $gy=h$.
This proves the lemma.
\end{proof}

\begin{proof}[Proof of Theorem \ref{thm:main1}]
Recall that $U$ is a $\rk(B)\times n$ matrix such that $U^TU=B$, with columns $u_1,\ldots,u_n$. 
It is well known that since the matrix $U$ is nondegenerate, 
the $O_k(\C)$-orbit of $U$ is closed (cf. \cite[Theorem 5]{DG}).
This implies that $f_U$ attains its minimum at some $g\in O_k(\C)$.
So Lemma \ref{lem:function} (i) implies that $gU(gU)^*\in \R^{k\times k}$.

Let $h'=\sum_{i=1}^n a_i \ev_{gu_i}$ and let $h=\sum_{i=1}^n a_i \ev_{u_i}$.
Observe that since $(gU)^TgU=B$, Lemma \ref{lem:spinvert} implies that $p_h=p_{h'}$. 
This shows that (iii) implies (i). (This also follows from \eqref{eq:graphsgeninvs}, using that $h'=gh$.)
Moreover, for the rest of the proof we may assume that $g$ is equal to the identity.

Since $(a,B)$ is twin free, the $u_i$ are distinct. Hence Lemma \ref{lem:complex conj} immediately implies the equivalence of (ii) and (iii). 

To prove that (i) implies (iii).
Let $y$ be a real $l$-color edge-coloring model such that $p_{a,B}=p_y$.
Since $U$ is nondegenerate, Lemma \ref{lem:orbit} implies the existence of a $g\in O_{l}(\C)$ such that $y=gh$.
Now note that $y=\sum_{i=1}^n a_i \ev_{gu_i}$. 
As $y$ is real, Lemma \ref{lem:complex conj} implies that the set $\{gu_i\}$ is closed under complex conjugation, implying that $gU(gU)^*\in \R^{l \times l}$.
So by Lemma \ref{lem:function} (i) the infimum of $f_{gU}$ is attained at $e$.
Equivalently, the infimum of $f_U$ is attained at $g$.
Since $UU^*\in \R^{k\times k}$, this implies, by Lemma \ref{lem:function} (ii), that $g\in O_l(\R)\cdot\stab(U)$.
Hence $g=g_1\cdot s$ for some $g_1\in O_l(\R)$ and $s\in \stab(U)$.
Now note that since $sh=h$ we have that $h=g_1^{-1}y$ and hence $h$ is real.
\end{proof}

\begin{ack}
I thank Lex Schrijver for his comments on earlier versions of this paper. 
In particular, for simplifying some of the proofs. I moreover thank the anonymous referees for their comments, improving the 
readability of this paper.
\end{ack}

\end{document}